\documentclass[12pt,reqno]{amsart}
\usepackage{amsmath, amsthm, amsopn, amssymb, microtype}
\usepackage{mathrsfs} 
\usepackage{mathscinet}
\usepackage{bbm}
\usepackage{enumerate, tikz, etoolbox, intcalc, geometry, caption, subcaption, afterpage}
\usepackage[english]{babel}
\usepackage{enumitem}

\usepackage[colorlinks=true,urlcolor=blue,pdfborder={0 0 0}]{hyperref}
\hypersetup{linkcolor=[rgb]{0,0,0.6}}
\hypersetup{citecolor=[rgb]{0,0.6,0}}
\usepackage[nameinlink]{cleveref}
\crefname{theorem}{Theorem}{Theorems}
\crefname{thm}{Theorem}{Theorems}
\crefname{mainthm}{Theorem}{Theorems}
\crefname{conj}{Conjecture}{Theorems}
\crefname{lemma}{Lemma}{Lemmas}
\crefname{lem}{Lemma}{Lemmas}
\crefname{remark}{Remark}{Remarks}
\crefname{prop}{Proposition}{Propositions}
\crefname{defn}{Definition}{Definitions}
\crefname{corollary}{Corollary}{Corollaries}
\crefname{cor}{Corollary}{Corollaries}
\crefname{section}{Section}{Sections}
\crefname{figure}{Figure}{Figures}
\crefname{quest}{Question}{Questions}

% Margins
\setlength{\topmargin}{0in}
\setlength{\leftmargin}{0in}
\setlength{\rightmargin}{0in}
\setlength{\evensidemargin}{0in}
\setlength{\oddsidemargin}{0in}

% Text area size
\setlength{\textwidth}{6.5in}
\setlength{\textheight}{9in}
% Fonts
\newcommand{\N}{\mathbb{N}}
\newcommand{\Z}{\mathbb{Z}}

\newcommand{\Tile}{\mathit{Tile}}

\graphicspath{ {./Pictures/} }

\newcommand{\ignore}[1]{}

%comments

% Definitions
\newtheorem{thm}{Theorem}[section]
\newtheorem{lemma}[thm]{Lemma}

\newtheorem{cor}[thm]{Corollary}

\theoremstyle{definition}

\newtheorem{remark}[thm]{Remark}

\newif\ifdraft\drafttrue
%for nondraft mode when typesetting this file by itself
%uncomment the following line:
%\draftfalse

\begin{document}

\title{A note on reduction of tiling problems} 
\author{Tom Meyerovitch}
\address{Ben-Gurion University of the Negev,
	Department of Mathematics,
	Beer-Sheva, 8410501, Israel.
	{\tt mtom@bgu.ac.il}
}

\author{Shrey Sanadhya} 
\address{Ben-Gurion University of the Negev,
	Department of Mathematics,
	Beer-Sheva, 8410501, Israel.
	{\tt sanadhya@post.bgu.ac.il}
}

\author{Yaar Solomon}
\address{Ben-Gurion University of the Negev.
	Department of Mathematics.
	Beer-Sheva, 8410501, Israel. 
	{\tt yaars@bgu.ac.il}
}

\subjclass[2000]{52C23, 03B25, 03D30, 05B45, 52C22}
\keywords{tranlational tilings, periodic tiling conjecture, decidability of tiling problems}

\begin{abstract}
We show that translational tiling problems in a  quotient of $\Z^d$ can be effectively reduced or ``simulated'' by translational tiling problems in $\Z^d$. In particular, for any $d \in \mathbb{N}$, $k < d$ and $N_1,\ldots,N_k \in \mathbb{N}$  the existence of an aperiodic tile in $\Z^{d-k} \times (\Z / N_1\Z \times \ldots \times \Z / N_k \Z)$ implies the existence of an aperiodic tile in $\Z^d$. Greenfeld and Tao have recently disproved the well-known periodic tiling conjecture in $\Z^d$ for sufficiently large $d \in \mathbb{N}$ by constructing an aperiodic tile in $\Z^{d-k} \times (\Z / N_1\Z \times \ldots \times \Z / N_k \Z)$ for suitable $d,N_1,\ldots,N_k \in \mathbb{N}$.
\end{abstract}

\maketitle

\section{Introduction} 

In this note, we consider translational tiles over finitely generated abelian groups. 
Any finitely generated abelian group is a quotient of $\Z^d$ for some positive integer $d$. 
Briefly, we show that for any tuple of finite subsets $F_1,\ldots,F_k$ in a quotient $\Gamma$ of $\Z^d$, there exists an explicit tuple of finite subsets $\tilde F_1,\ldots,\tilde F_k$ of $\Z^d$ so that tilings of $\Z^d$ by $\tilde F_1,\ldots,\tilde F_k$ directly correspond, formally and explicitly, to tilings of $\Gamma$ by $F_1,\ldots,F_k$. Informally, this means that tiling problems in $\Z^d$ can effectively simulate tiling problems in $\Gamma$. 

The motivation for writing this note comes from the periodic tiling conjecture and questions about the decidability of the tiling problems. Bhattacharya \cite{BPeriodicity2020} proved that a finite subset $F \subset \Z^2$ can tile $\Z^2$ if and only if it admits a periodic tiling, resolving the $\Z^2$ case of the well-known periodic tiling conjecture. 
The periodic tiling conjecture seems to be currently open in $\Z^3$ and also in $\Z^2 \times \Z / N\Z$ for general $N \in \mathbb{N}$; see \cite[subsection $ 1.3$]{Greenfeld_Tao2_2021}. From our argument below, it follows that an affirmative solution of the periodic tiling conjecture in  $\Z^3$ would automatically imply an affirmative resolution of the periodic tiling conjecture in $\Z^2 \times \Z / N\Z$ for all $N \in \mathbb{N}$.

The main result of  \cite{Greenfeld_Tao2_2021} is the existence of a rank $2$ finitely generated group $\Gamma$ and two finite subsets $F_1,F_2 \subset \Gamma$ that can tile a certain periodic subset $E \subseteq\Gamma$, but such that no proof of the sentence ``there is a tiling of $E$ by $F_1$ and $F_2$'' exists in ZFC. In particular, such sets $F_1,F_2$ cannot tile $E$ periodically. Building on this construction, it was further shown in \cite{Greenfeld_Tao2_2021} that a similar statement holds when replacing $\Gamma$ by $\Z^d$ for sufficiently large $d$. The reduction procedure described below is a slight elaboration of the construction applied by   Greenfeld and Tao in \cite{Greenfeld_Tao2_2021}. The main new point is that the reduction procedure can be used in a general setting, independently of any properties of the given set of tiles, the number of tiles, the rank $d$ of $\Z^d$, and the quotient group $\Gamma$ of $\Z^d$.

Greenfeld and Tao \cite{Greenfeld_Tao_2022} have recently disproved the periodic tiling conjecture in $\Z^d$ for sufficiently large $d \in \mathbb{N}$ by constructing an aperiodic tile in  $\Z^{d-k} \times (\Z / N_1\Z \times \ldots \times \Z / N_k \Z)$, for suitable $d,k,N_1,\ldots,N_k \in \mathbb{N}$.

We briefly describe our notational conventions that closely follow the conventions of \cite{Greenfeld_Tao2_2021}.
We write $\biguplus_{i \in I}A_i$ to denote the union of pairwise disjoint sets. 
Given an abelian group $\Gamma$ and subsets $A,B \subseteq \Gamma$, we write $A \oplus B= C$ to indicate that for every $c \in C$ there exists a unique pair $a \in A$ and $b \in B$ such that $c=a+b$. For $N \in \mathbb{N}$ and $A \subseteq \Z^d$, we denote
$N A := \{ N v :~ v \in A\}$.
%For a countable abelian group $\Gamma$ the notation $F\Subset \Gamma$ indicates that $F$ is a finite subset of $\Gamma$. 
Given finite subsets $F_1,\ldots,F_k \subset \Gamma$ and $E \subseteq \Gamma$ we use the notation 
\begin{equation}
\label{eq:GT_notation_space_of_co-tiles}
\begin{split}
&\Tile(F_1,\ldots,F_k; E)= \left\{
(A_1,\ldots,A_k) \in (2^\Gamma)^k :~ \biguplus_{j=1}^k (F_j \oplus A_j) = E
\right\}, \\
&\text{ and } \\
&\Tile_0(F_1,\ldots,F_k; E)= \left\{
(A_1,\ldots,A_k) \in \Tile(F_1,\ldots,F_k; E) :~ 0 \in \biguplus_{j=1}^k  A_j \right\}.
\end{split}
\end{equation}

We say that a $k$-tuple $(A_1,\ldots,A_k) \in (2^\Gamma)^k$ is \emph{periodic} if  there exist a finite index subgroup $\Gamma_0 < \Gamma$ that fixes every $A_j$. Equivalently, $(A_1,\ldots,A_k) \in (2^\Gamma)^k$ is periodic if there exists finite sets $C_1,\ldots,C_k \subset \Gamma$ such that $A_j = C_j \oplus \Gamma_0$ for all $1\le j \le k$. A $k$-tuple  $(F_1,\dots,F_k)$ of finite subsets of $\Gamma$ is \emph{aperiodic} if 
$\Tile(F_1,\dots,F_k; \Gamma)$ is non-empty but does not contain a periodic $k$-tuple. Given a subgroup $L < \Gamma$ we say that $D \subseteq \Gamma$ is a \emph{fundamental domain} for $L$ if $D$ contains exactly one representative of each coset of $L$. Equivalently,   $D \subseteq \Gamma$ is a fundamental domain if $D \oplus L = \Gamma$.

Here is the statement of the main reduction result:

\begin{thm}\label{thm:tiling_reduction}
Let $d,k \in \N$, $\pi:\Z^d \to \Gamma$ a surjective group homomorphism and let $D \subseteq \Z^d$ be a fundamental domain for $\ker(\pi)$.
Then there exists $N \in \N$ and finite subsets $T_1,\ldots,T_{2k} \subset \Z^d$ 
 such that for every $F_1,\ldots,F_k$, finite subsets of $\Gamma$ with $0 \in F_j$ for all $1\le j \le k$, we have
\begin{equation}\label{eq:tiling_reduction1}
\begin{split}
& \Tile_0(\tilde F_1,\ldots,\tilde F_k; \Z^d) = 
N \Big( (\pi^{\otimes k})^{-1} \Tile_0\left(F_1,\ldots,F_k; \Gamma\right) \Big), \\
 & %\qquad\qquad\qquad 
 \text{where for } 1\le j\le k \\
& %\qquad\qquad 
\tilde F_j := \left(N (\pi^{-1} (F_j\setminus \{0\}) \cap D) \oplus T_j\right) \uplus  T_{k+j} \quad \subset \Z^d.
\end{split}
\end{equation}
Here $\pi^{\otimes k}:(\Z^d)^k \to \Gamma^k$ is given by \[\pi^{\otimes k}(v_1,\ldots,v_k)=(\pi(v_1),\ldots,\pi(v_k)) \mbox{ for } v_1,\ldots,v_k \in \Z^d.\]
\end{thm}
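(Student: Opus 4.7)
My plan is to explicitly construct the scaling integer $N$ and the gadget sets $T_1, \ldots, T_{2k}$ (following the approach of Greenfeld--Tao \cite{Greenfeld_Tao2_2021}), then verify both inclusions of the claimed equation. I would take $N$ sufficiently large and design $T_1, \ldots, T_{2k}$ as pairwise disjoint ``rigidity markers'' in $\Z^d$: $T_j$ (for $j \le k$) labels the bulk portion of tile $\tilde F_j$ corresponding to $F_j \setminus \{0\}$, while $T_{k+j}$ labels the origin portion. The gadgets are required to have two properties: (a) an appropriate shape/layout so that translations by $N\Z^d$ assemble them compatibly with the $\tilde F_j$'s into tilings of $\Z^d$, and (b) rigidity -- ensuring that any $\Z^d$-tiling by the $\tilde F_j$'s must arise as a scaled preimage of a $\Gamma$-tiling.

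For the forward inclusion ($\supseteq$), given $(A_1, \ldots, A_k) \in \Tile_0(F_1, \ldots, F_k; \Gamma)$, I would set $\tilde A_j := N\pi^{-1}(A_j)$. The identity
\[
(\pi^{-1}(F_j \setminus \{0\}) \cap D) \oplus \pi^{-1}(A_j) \;=\; \pi^{-1}\bigl((F_j \setminus \{0\}) \oplus A_j\bigr),
\]
which follows from $D$ being a fundamental domain for $\ker(\pi)$ together with the directness of $F_j \oplus A_j$ in the $\Gamma$-tiling, allows a direct computation of $\tilde F_j \oplus \tilde A_j$. Setting $C_j := (F_j \setminus \{0\}) + A_j$ and $C_{k+j} := A_j$ (so that $\{C_i\}_{i=1}^{2k}$ partitions $\Gamma$), one obtains
\[
\biguplus_{j=1}^{k} \tilde F_j \oplus \tilde A_j \;=\; \biguplus_{i=1}^{2k} \bigl(N \pi^{-1}(C_i) \oplus T_i\bigr),
\]
which equals $\Z^d$ by property~(a). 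The condition $0 \in \bigcup_j \tilde A_j$ matches $0 \in \bigcup_j A_j$ via the observation $0 \in N\pi^{-1}(A) \Leftrightarrow 0 \in A$.

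For the reverse inclusion ($\subseteq$), property~(b) forces each $A_j'$ to equal $N \pi^{-1}(A_j)$ for some $A_j \subseteq \Gamma$, and projecting the $\Z^d$-tiling equation through $\pi$ then yields $(A_1, \ldots, A_k) \in \Tile_0(F_1, \ldots, F_k; \Gamma)$. Concretely, rigidity decomposes into two sub-claims: first, that the tile placements $A_j'$ are forced to lie in the sublattice $N\Z^d$ (an alignment argument exploiting the geometry of $\tilde F_j$), and second, that once lifted back to $\Z^d / N = \Z^d$ the resulting sets are $\ker(\pi)$-equivariant (so they are full preimages rather than arbitrary lifts).

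The main obstacle is constructing the gadgets $T_i$ to achieve properties~(a) and (b) simultaneously, especially rigidity~(b), which must rule out ``exotic'' $\Z^d$-tilings -- for instance, mixed periodic tilings alternating tile types in patterns not corresponding to any $\Gamma$-tiling. The balance is delicate because the gadgets must be chosen once and for all, uniformly over every possible choice of $F_1, \ldots, F_k$; this is the core technical step, and I expect it to follow the explicit combinatorial design of Greenfeld--Tao, with the gadgets interlocking in a lock-and-key fashion that propagates alignment from an anchor tile (the one covering the origin) outward to the entire tiling.
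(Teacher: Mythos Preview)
Your plan is correct and follows essentially the same approach as the paper. The paper packages your properties~(a) and~(b) into a single ``rigid tuple of tiles'' lemma (\Cref{lem:rigid_tiles}), which constructs $N$ and $T,T_1,\ldots,T_{2k}$ (boxes with jigsaw-style bumps and dents) so that $\Tile_0(T;\Z^d)=\{N\Z^d\}$, each $T_i\oplus NL=T\oplus NL$ with $L=\ker\pi$, and $(\tilde C_1,\ldots,\tilde C_{2k})\in\Tile_0(T_1,\ldots,T_{2k};\Z^d)$ if and only if $\{\tilde C_i\}$ is a partition of $N\Z^d$ into $NL$-periodic sets; both inclusions then proceed exactly as you describe, with the reverse direction using the auxiliary tile $T$ to strip off the gadgets before pushing the tiling equation through $\pi$.
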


We state some direct corollaries of \Cref{thm:tiling_reduction}. Suppose $\pi:\Z^d \to \Gamma$ is a surjective group homomorphism.

\begin{cor}\label{cor:periodicity_reduction}
If there exists an aperiodic $k$-tuple for $\Gamma$, then there exists an aperiodic $k$-tuple for $\Z^d$.
\end{cor}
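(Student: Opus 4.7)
The plan is to apply \Cref{thm:tiling_reduction} and transfer aperiodicity from $\Gamma$ to $\Z^d$ through the correspondence in \eqref{eq:tiling_reduction1}. First, I would normalize the given aperiodic $k$-tuple $(F_1,\ldots,F_k)$ so that $0 \in F_j$ for every $j$. This is harmless: for any choice of $f_j \in F_j$, the translated tuple $(F_1 - f_1,\ldots,F_k - f_k)$ is again aperiodic, since $(F_j - f_j) \oplus (A_j + f_j) = F_j \oplus A_j$ shows that translating tiles merely translates the associated placements and preserves both existence and periodicity of tilings. After this normalization, apply \Cref{thm:tiling_reduction} to obtain $N \in \N$ and $\tilde F_1,\ldots,\tilde F_k \subset \Z^d$; I claim this tuple is aperiodic for $\Z^d$.

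For nonemptiness of $\Tile(\tilde F_1,\ldots,\tilde F_k; \Z^d)$, I would pick any $(A_1,\ldots,A_k) \in \Tile(F_1,\ldots,F_k;\Gamma)$ (which exists by aperiodicity) and translate it in $\Gamma$ so that $0 \in \biguplus_j A_j$. Then \eqref{eq:tiling_reduction1} provides a corresponding element of $\Tile_0(\tilde F_1,\ldots,\tilde F_k; \Z^d) \subseteq \Tile(\tilde F_1,\ldots,\tilde F_k; \Z^d)$, so the latter is nonempty.

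For aperiodicity, suppose for contradiction that $\Tile(\tilde F_1,\ldots,\tilde F_k;\Z^d)$ contains a periodic tuple $(B_1,\ldots,B_k)$ with finite-index period lattice $\Lambda < \Z^d$. Translating (which preserves periodicity), we may assume $(B_1,\ldots,B_k) \in \Tile_0$, so by \eqref{eq:tiling_reduction1} we have $(B_1,\ldots,B_k) = N(C_1,\ldots,C_k)$ with $(\pi(C_1),\ldots,\pi(C_k)) \in \Tile_0(F_1,\ldots,F_k;\Gamma)$. The intersection $\Lambda \cap N\Z^d$ is still of finite index in $\Z^d$ (because $N\Z^d$ is of finite index $N^d$), and is of the form $N\Lambda'$ for some finite-index subgroup $\Lambda' < \Z^d$. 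From $B_j + N\Lambda' = B_j$, i.e.\ $NC_j + N\Lambda' = NC_j$, we deduce $C_j + \Lambda' = C_j$ and hence $\pi(C_j) + \pi(\Lambda') = \pi(C_j)$. Since $\pi$ is surjective and $\Lambda'$ has finite index in $\Z^d$, the image $\pi(\Lambda')$ has finite index in $\Gamma$, so $(\pi(C_1),\ldots,\pi(C_k))$ is a periodic element of $\Tile(F_1,\ldots,F_k;\Gamma)$, contradicting the assumed aperiodicity.

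The only delicate point is the descent of periodicity through the factor of $N$; everything else is direct bookkeeping once \Cref{thm:tiling_reduction} is in hand. The key observation that unlocks the descent is that a finite-index subgroup remains of finite index after intersection with $N\Z^d$, which lets us factor the period lattice as $N\Lambda'$ and then push it down to $\Gamma$ via $\pi$.
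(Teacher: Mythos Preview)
Your proof is correct and follows essentially the same approach as the paper's: apply \Cref{thm:tiling_reduction}, deduce nonemptiness of $\Tile(\tilde F_1,\ldots,\tilde F_k;\Z^d)$ from the correspondence, and argue that a periodic $\Z^d$-tiling would push down to a periodic $\Gamma$-tiling. You simply fill in details that the paper leaves implicit --- the normalization $0\in F_j$, the passage between $\Tile$ and $\Tile_0$ by translation, and the descent of the period lattice through the factor $N$ and the surjection $\pi$ --- all of which the paper compresses into the single phrase ``this implies that $(A_1,\ldots,A_k)$ is periodic.''
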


\begin{cor}\label{cor:algorithmic_decidability_reduction}
For every $k\in\N$, if tiling by $k$ tiles in $\Z^d$ is algorithmically decidable, then tiling by $k$ tiles in $\Gamma$  is algorithmically decidable. 
\end{cor}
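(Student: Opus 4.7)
The plan is to reduce the decision problem for $k$-tile tilings in $\Gamma$ to the one in $\Z^d$ via \Cref{thm:tiling_reduction}. Given an input $k$-tuple $(F_1,\ldots,F_k)$ of finite subsets of $\Gamma$, I first normalize it: if some $F_j$ is empty, then $\Tile(F_1,\ldots,F_k;\Gamma)=\emptyset$ and we return ``no''; otherwise pick some $v_j \in F_j$ and replace $F_j$ by $F_j - v_j$, so that $0 \in F_j$ for all $j$. Tileability is unaffected because $(F_j - v_j) \oplus (A_j + v_j) = F_j \oplus A_j$.

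Next, I invoke \Cref{thm:tiling_reduction} to obtain $N \in \N$ and $T_1,\ldots,T_{2k} \subset \Z^d$ (which depend only on the fixed data $d,k,\pi,D$, not on the input tiles), and then compute explicitly
\[ \tilde F_j := \bigl(N(\pi^{-1}(F_j \setminus \{0\}) \cap D) \oplus T_j\bigr) \uplus T_{k+j}, \qquad 1 \le j \le k. \]
Computing each $\tilde F_j$ only requires selecting the unique $D$-representative of each element of $F_j \setminus \{0\}$, which is effective given algorithmic descriptions of $\pi$ and $D$. I then run the hypothesized decision algorithm for $k$-tile tilings in $\Z^d$ on $(\tilde F_1,\ldots,\tilde F_k)$ and return its answer.

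Correctness follows from \Cref{thm:tiling_reduction}: the identity it supplies gives $\Tile_0(\tilde F_1,\ldots,\tilde F_k;\Z^d) \neq \emptyset$ if and only if $\Tile_0(F_1,\ldots,F_k;\Gamma) \neq \emptyset$. Since the $F_j$ (and hence the $\tilde F_j$) are all non-empty after normalization, any tiling can be translated so that $0$ lies in some $A_j$, so $\Tile \neq \emptyset$ is equivalent to $\Tile_0 \neq \emptyset$ on both sides. Consequently the $\Z^d$-algorithm's answer determines whether the original $(F_1,\ldots,F_k)$ tiles $\Gamma$. The main point meriting verification is the effectiveness of the construction of $N$ and the $T_j$ supplied by \Cref{thm:tiling_reduction}; this is really the only obstacle to turning the reduction into a decision procedure, and it should follow from the explicit, constructive nature of the theorem's proof.
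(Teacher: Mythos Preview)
Your approach is the same as the paper's: build a Turing machine that, on input $(F_1,\ldots,F_k)$, computes $(\tilde F_1,\ldots,\tilde F_k)$ via \Cref{thm:tiling_reduction} and feeds the result to the assumed $\Z^d$-decider. You are in fact more explicit than the paper in handling the normalization to $0\in F_j$, the passage between $\Tile$ and $\Tile_0$, and the effectiveness of the construction of $N$ and the $T_j$.

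There is one small slip in your normalization step. For $k\ge 2$, having some $F_j=\emptyset$ does \emph{not} force $\Tile(F_1,\ldots,F_k;\Gamma)=\emptyset$: since $\emptyset\oplus A_j=\emptyset$ for every $A_j$, tileability of the $k$-tuple is equivalent to tileability of the $(k-1)$-tuple obtained by deleting $F_j$. A clean repair that keeps $k$ fixed is: if all $F_j$ are empty return ``no''; otherwise replace each empty $F_j$ by a copy of some fixed nonempty $F_i$. Tileability is unchanged (given a tiling of the original, set $A_j=\emptyset$; given a tiling of the modified tuple, merge $A_j$ into $A_i$, which is legitimate since both use the same tile $F_i$). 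After this correction the rest of your argument goes through verbatim.
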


\begin{cor}\label{cor:logical_decidability}
Let $F_1,\ldots,F_k$ be finite subsets of $\Gamma$  and let $\mathcal{T}$ be a  theory in first-order logic in which the proof of \eqref{eq:tiling_reduction1} can be expressed. If the sentence   $\Tile(\tilde F_1,\ldots,\tilde F_k;\Z^d) \ne \emptyset$ is provable in $\mathcal{T}$, then the sentence 
$\Tile(F_1,\ldots,F_k;\Gamma) \ne \emptyset$ is provable in $\mathcal{T}$.
\end{cor}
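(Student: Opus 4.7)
The plan is to chain three elementary implications together, with \Cref{thm:tiling_reduction} as the central link, and to verify that every link is formalizable inside $\mathcal{T}$.

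First I would reduce to the case $0 \in F_j$ for all $j$. If some $F_j$ fails to contain $0$, pick any $f_j \in F_j$ and replace $F_j$ by $F_j - f_j$; both the $\Tile$ sets and the associated $\tilde F_j$ transform in a trivially parallel way under such shifts, so this reduction is pure finite bookkeeping and clearly expressible in $\mathcal{T}$.

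Next, inside $\mathcal{T}$, I would establish the equivalence $\Tile(\tilde F_1,\ldots,\tilde F_k;\Z^d) \ne \emptyset \iff \Tile_0(\tilde F_1,\ldots,\tilde F_k;\Z^d) \ne \emptyset$, and similarly for $\Gamma$. Given any tiling tuple $(A_1,\ldots,A_k)$, at least one $A_{j_0}$ must be nonempty (otherwise the tiling would cover $\emptyset$, not $\Z^d$); pick $a \in A_{j_0}$ and shift every coordinate by $-a$. The shifted tuple remains a tiling and now $0 \in A_{j_0} - a$. The converse direction is immediate.

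The central step is then a direct invocation of \eqref{eq:tiling_reduction1}, which by hypothesis is provable in $\mathcal{T}$: it identifies $\Tile_0(\tilde F_1,\ldots,\tilde F_k;\Z^d)$ with $N(\pi^{\otimes k})^{-1}\Tile_0(F_1,\ldots,F_k;\Gamma)$. Since both the dilation $A \mapsto NA$ and the coordinate-wise preimage $A \mapsto \pi^{-1}(A)$ manifestly send nonempty sets to nonempty sets, the right-hand side is nonempty if and only if $\Tile_0(F_1,\ldots,F_k;\Gamma)$ is nonempty. Composing the three implications yields the desired conclusion. I anticipate no serious obstacle: the only nontrivial ingredient is \Cref{thm:tiling_reduction} itself, which $\mathcal{T}$ proves by hypothesis, and the surrounding arguments are pure translation combinatorics over finite subsets of $\Z^d$ and $\Gamma$ which any first-order theory capable of expressing \eqref{eq:tiling_reduction1} trivially accommodates.
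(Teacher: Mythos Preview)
Your argument is correct and is essentially the paper's approach spelled out in detail; the paper's own proof is a one-liner (``concatenate the proof of $\Tile(\tilde F_1,\ldots,\tilde F_k;\Z^d)\ne\emptyset$ with the proof of \eqref{eq:tiling_reduction1}''), and your steps 2--4 are exactly the implications being concatenated.

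One minor caveat on your step 1: the reduction to $0\in F_j$ is unnecessary, since the hypothesis that \eqref{eq:tiling_reduction1} is provable in $\mathcal{T}$ already presupposes the setting of \Cref{thm:tiling_reduction} (which requires $0\in F_j$). More to the point, your claim that ``the associated $\tilde F_j$ transform in a trivially parallel way'' under the shift $F_j\mapsto F_j-f_j$ is not actually true: the map $F_j\mapsto \tilde F_j$ involves $\pi^{-1}(F_j\setminus\{0\})\cap D$, and replacing $F_j$ by a translate does not translate this set (the intersection with the fixed fundamental domain $D$ breaks translation-equivariance). Since the step is not needed, this does not damage the proof, but you should drop that assertion.
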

Readers familiar with logic can easily convince themselves that 
the proof of \eqref{eq:tiling_reduction1} can be expressed in ZFC, for any explicit finitely generated abelian group $\Gamma$ (in the sense of \cite{Greenfeld_Tao2_2021}) and any explicit homomorphism $\pi:\Z^d \to \Gamma$.% and any explicit finite subsets $F_1,\ldots,F_k \in \Gamma$. 

{\bf Acknowledgment:} We thank Rachel Greenfeld and Terry Tao for their encouragement and Ville Salo for helpful suggestions regarding the presentation. This research was partially supported by the Israel Science Foundation grant no. 1058/18.

\section{Proof of the reduction theorem}\label{sec:proofs}
The proof of \Cref{thm:tiling_reduction} is based on the following lemma. The case $k=1$  and $L= \{0\}$ is essentially Lemma $9.3$ of \cite{Greenfeld_Tao2_2021}), where the basic idea has been attributed to Golomb \cite{MR255431}.

\begin{lemma}[Rigid tuple of tiles]\label{lem:rigid_tiles}
Let $d,s \in \N$, let $L < \Z^d$,
and let $D \subseteq \Z^d$ be a fundamental domain for $L$. Then
there exists $N \in \N$ and finite sets $T,T_1,\ldots,T_s \subset \Z^d$, with $0 \in T$ and $0 \in T_j$ for every $1\le j \le s$, such that
\begin{enumerate} [label = (\alph*)]
    \item 
    $\Tile_0(T;\Z^d) = \{N\Z^d\}$.
    \item
    For every $1 \le j \le s$ we have $T_j \oplus NL = T \oplus NL$.
    \item
    $(\tilde C_1,\dots,\tilde C_s) \in \Tile_0(T_1,\ldots,T_s; \Z^d)$ if and only if $\biguplus_{j=1}^s \tilde C_j= N\Z^d$ and each $\tilde C_j$ is $NL$-periodic.
\end{enumerate} 
\end{lemma}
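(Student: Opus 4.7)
The plan is to build $T$ and $T_1,\dots,T_s$ in two layers: a rigid ``skeleton'' $T$ handling (a), and ``label-encoded'' perturbations $T_1,\dots,T_s$ that differ from $T$ only by $NL$-shifts of a small signature set, handling (b) and (c). For the skeleton I would take $T := \{0,1,\dots,N-1\}^d$ with $N$ to be fixed below; the standard slice-by-slice argument (in each one-dimensional cross-section the tiles restrict to length-$N$ intervals, and length-$N$ intervals tile $\Z$ only by $N\Z$) gives $\Tile_0(T;\Z^d)=\{N\Z^d\}$, yielding (a).

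To construct the $T_j$'s, fix generators $\ell_1,\dots,\ell_r$ of $L$ and choose $r$ distinct ``signature basepoints'' $p_1,\dots,p_r\in T\setminus\{0\}$ together with distinct positive integers $\lambda_{i,j}$ for $i\in\{1,\dots,r\}$, $j\in\{1,\dots,s\}$. Set
\[ T_j := \bigl(T\setminus\{p_1,\dots,p_r\}\bigr) \cup \bigl\{p_1+\lambda_{1,j}N\ell_1,\ \dots,\ p_r+\lambda_{r,j}N\ell_r\bigr\}. \]
Property (b) is immediate since each substitution differs by $\lambda_{i,j}N\ell_i\in NL$, so $T_j\oplus NL = T\oplus NL$. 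The easy direction of (c) then follows from (b): if each $\tilde C_j$ is $NL$-periodic then the ``bumped'' element $p_i+\lambda_{i,j}N\ell_i$ added to any $c\in\tilde C_j$ produces the same point as the ``flat'' element $p_i$ added to the $NL$-translate $c+\lambda_{i,j}N\ell_i\in\tilde C_j$, so $T_j\oplus\tilde C_j = T\oplus\tilde C_j$, and summing over $j$ gives $\biguplus_j(T_j\oplus\tilde C_j) = T\oplus N\Z^d = \Z^d$.

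The hard direction of (c) is where the real work is; starting from a valid tiling $(\tilde C_1,\dots,\tilde C_s)$ one must establish (i) joint rigidity $\biguplus_j\tilde C_j=N\Z^d$, and (ii) $NL$-periodicity of each $\tilde C_j$. For (i), I would slice $\Z^d$ perpendicular to each coordinate direction: outside the finitely many ``signature slices'' the tiles restrict to full length-$N$ segments, and 1D rigidity forces the corresponding coordinate of every tile position into $N\Z$; running this for each coordinate yields $\biguplus_j\tilde C_j\subseteq N\Z^d$, and a density/counting check upgrades the inclusion to equality. For (ii), fix a generator $\ell_i$ of $L$: each tile $T_j$ at $c\in N\Z^d$ creates a ``hole'' at $p_i+c$ (in $T+c$ but removed from $T_j+c$) and a ``protrusion'' at $p_i+c+\lambda_{i,j}N\ell_i$, and the hole at $p_i+c$ must be filled by the protrusion of a uniquely chosen tile sitting at $c-\lambda_{i,j'}N\ell_i$ of label $j'$. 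Requiring this matching to be a bijection on the $NL$-coset of $c$ (and using distinctness of the $\lambda_{i,\cdot}$, essentially the elementary fact that a ``colored shift'' $t\mapsto t+\lambda_{c(t)}$ on $\Z$ is a bijection only when the coloring $c$ is constant) forces the labels to be constant along $\ell_i$ within each $NL$-coset; intersecting over $i=1,\dots,r$ gives the required $NL$-periodicity.

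The main obstacle I expect is the joint rigidity step (i): one must rule out exotic mixed tilings where the signature protrusions of distinct $T_j$'s accidentally mesh so as to permit tile positions off $N\Z^d$. Making the slicing argument airtight will likely demand that $N$ and the $\lambda_{i,j}$ be chosen sufficiently large and generic; if the bare cube skeleton is not robust enough under the perturbations, I would reinforce $T$ with classical Golomb-style key/lock features (as in Lemma~9.3 of \cite{Greenfeld_Tao2_2021}, the $k=1$, $L=\{0\}$ case cited in the lemma) before attaching the label perturbations.
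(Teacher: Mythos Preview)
Your two-layer strategy matches the paper's, but both layers as written contain errors. First, the bare cube is not rigid: $\Tile_0(\{0,\dots,N-1\}^d;\Z^d)=\{N\Z^d\}$ is false for every $d\ge 2$, since the slicing argument only pins each axis-parallel line to \emph{some} coset of $N\Z$, and that coset can vary from line to line (brick-wall tilings). So (a) already fails for your skeleton, and the Golomb key/lock you mention as a fallback is in fact mandatory from the outset. Second, the ``elementary fact'' about colored shifts is false: with shifts $\lambda_1=1$, $\lambda_2=3$, the coloring $c(t)=1$ for odd $t$ and $c(t)=2$ for even $t$ makes $t\mapsto t+\lambda_{c(t)}$ a bijection of $\Z$ (evens $\to$ odds via $+3$, odds $\to$ evens via $+1$) without $c$ being constant. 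Single-point perturbations distinguished only by offset magnitude therefore cannot force $NL$-periodicity of the $\tilde C_j$.

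The paper fixes both issues with one device: \emph{shaped} bumps and dents. The tile $T$ is the box $B_m$ with $d$ dents of mutually incomparable shapes $S_1,\dots,S_d$ (frames $S_n=B_n\setminus B_{n-1}$) removed and matching bumps reattached at offsets $Ne_1,\dots,Ne_d$; since no translate of $S_i$ fits inside any $S_j$ with $j\ne i$, each bump can only mate with its own dent, forcing neighbours to sit at $\pm Ne_i$ and yielding (a). Each $T_j$ then carries $r$ further bump/dent pairs of shapes $S_{d+r(j-1)+1},\dots,S_{d+rj}$, a block disjoint from every other label's, attached at offsets $Nw_1,\dots,Nw_r$ where $w_1,\dots,w_r$ generate $L$. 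The same shape-matching now forces the $Nw_l$-neighbour of a label-$j$ tile to also carry label $j$, giving $NL$-periodicity of $\tilde C_j$ directly, with no colored-shift argument needed.
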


\begin{remark}
\begin{enumerate}[label = (\roman*)]
    \item 
    The tiles $T,T_1,\ldots,T_s$ that are defined in the proof below are essentially boxes of side length $N$, with bumps and dents, such as pieces of a jigsaw puzzle, whose purpose is to enforce their positions (see \Cref{Fig:T}). For simplicity, the bumps and dents in the proof are of the form of ``a frame" of width $1$, i.e. the difference between two centered boxes, $B_n\setminus B_{n-1}$. We did not attempt to minimize the number $N$.  
    \item
    We note that as a consequence of (c), if $(\tilde C_1,\dots,\tilde C_s) \in \Tile_0(T_1,\ldots,T_s; \Z^d)$ then any permutation of $\{\tilde C_1,\dots,\tilde C_s\}$ is also in $\Tile_0(T_1,\ldots,T_s; \Z^d)$.
    \end{enumerate}
\end{remark}

\begin{proof}
We prove the claim only for $d \ge 2$, since the proof for the case $d=1$ requires slightly different considerations but is not more difficult.

Let $d,s \in \N$ and $L < \Z^d$ be given. Since $L$ is a finitely generated abelian group of rank at most $d$, there exists $r \le d$ and $w_1,\ldots, w_r \in L$  that generate $L$ as a free abelian group in the  sense that 
$L = \Z w_1 \oplus \ldots \oplus \Z w_r$. Also, let $e_1,\ldots,e_d \in \Z^d$ be a set of free generators for $\Z^d$ (say, the standard generators), and for $n\ge 0$, let 
\[
B_n:= \{-n,\ldots,n\}^d \subset \Z^d
\]
and
\[ 
S_n:= B_n \setminus B_{n-1} \mbox{ for } n \ge 1.
\] 

The $S_i$'s play the role of the bumps and dents in our tiles, and the properties that we need from them are that they are all bounded and that for every $i\neq j$ the set $S_i$ does not contain a translated copy of the set $S_j$. 

Choose $m \in \N$ large enough so that $B_m$ contains disjoint translated copies of all the boxes $B_l$ for $1\le l\le d+rs$ (e.g. $m \ge (d+rs)^2$), and let $N= 2m+1$.

Choose translation vectors $v_1,\ldots,v_{d+rs}\in B_m$ so that 
\begin{align*}
& \forall 1 \le i \le d+rs: \quad  (v_i + B_{d+rs}) \subseteq B_m, \\
& \text{and }  \\
& \forall 1 \le i_1 < i_2 \le d+rs: \quad (v_{i_1} + B_{d+rs}) \cap (v_{i_2} + B_{d+rs}) = \emptyset. 
\end{align*}

We first define the tile $T \subset \Z^d$ as follows: 
\begin{equation}\label{eq:T}
T: =  \left( B_m \setminus \biguplus_{i=1}^{d}\left(v_i + S_{i} \right) \right) \uplus \biguplus_{i=1}^d\left(N e_i + v_i + S_{i} \right).
\end{equation}

\begin{figure}%[ht!] 
        \includegraphics[scale=0.4]{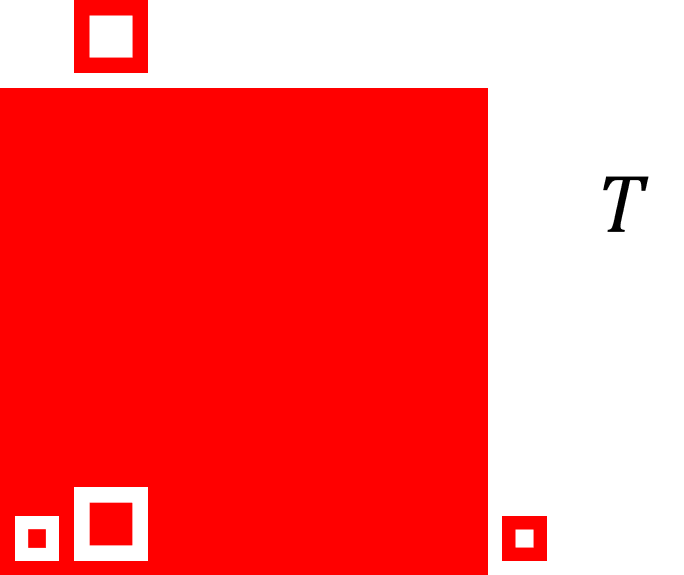}
		\caption{For a specific choice of vectors $v_i$'s, the tile $T$ from \eqref{eq:T} is illustrated here for $d=2$.}\label{Fig:T}
\end{figure}

We refer to the missing translates of $S_i$'s in $T$ as \emph{dents}, and to the disconnected translates of the $S_i$'s as \emph{bumps}.
We claim that $\Tile_0(T;\Z^d)= \{N\Z^d\}$. 
The claim is fairly evident from \Cref{Fig:T}. We provide some details below.

The properties of $T$ that we use, which are straightforward from \eqref{eq:T}, are the following: Firstly, $T$ is a fundamental domain for $N\Z^d$. Secondly, for every $1 \le i \le d$ the only $v \in T$ such that $v + S_i \cap T = \emptyset$ is $v_i$.  Thus,  $Ne_i + T$ is the unique translate of $T$ that can cover $Ne_i$ without intersecting the corresponding bump $Ne_i + v_i + S_i$. See \Cref{Fig:T}.
Since $T$ is a fundamental domain for $N\Z^d$, we have $N \Z^d \in \Tile_0(T;\Z^d)$.
Now suppose $C \in  \Tile_0(T;\Z^d)$, then for every $v \in C$ and $1\le i \le d$ we also have $v\pm Ne_i \in C$, because each bump of $v+T$ must fit into a corresponding dent and vice versa. So $C$ must be a union of cosets of $N\Z^d$.
But since $T$ is a fundamental domain for $N\Z^d$, it follows that $C$ must  be a coset of $N\Z^d$. Since $0 \in C$ we must have $C= N\Z^d$.

We now define the tiles $T_1,\ldots,T_s$.
For $1 \le j \le s$ the tile $T_j$ is defined by adding additional $r$ bumps and $r$ dents to the tile $T$. The bumps and dents are of the form $S_l$, for $d+r(j-1)< l\le d+jr$:
\begin{equation}\label{eq:Tj}
T_j := \left( T\setminus \biguplus_{l=1}^{r}\left(v_{d+r(j-1)+l} + S_{d+r(j-1)+l} \right) \right)
 \uplus \biguplus_{l=1}^r\left(N w_l + v_{d+r(j-1)+l} + S_{d+r(j-1)+l} \right).
\end{equation}

\begin{figure}[ht!]
		\includegraphics[scale=0.5]{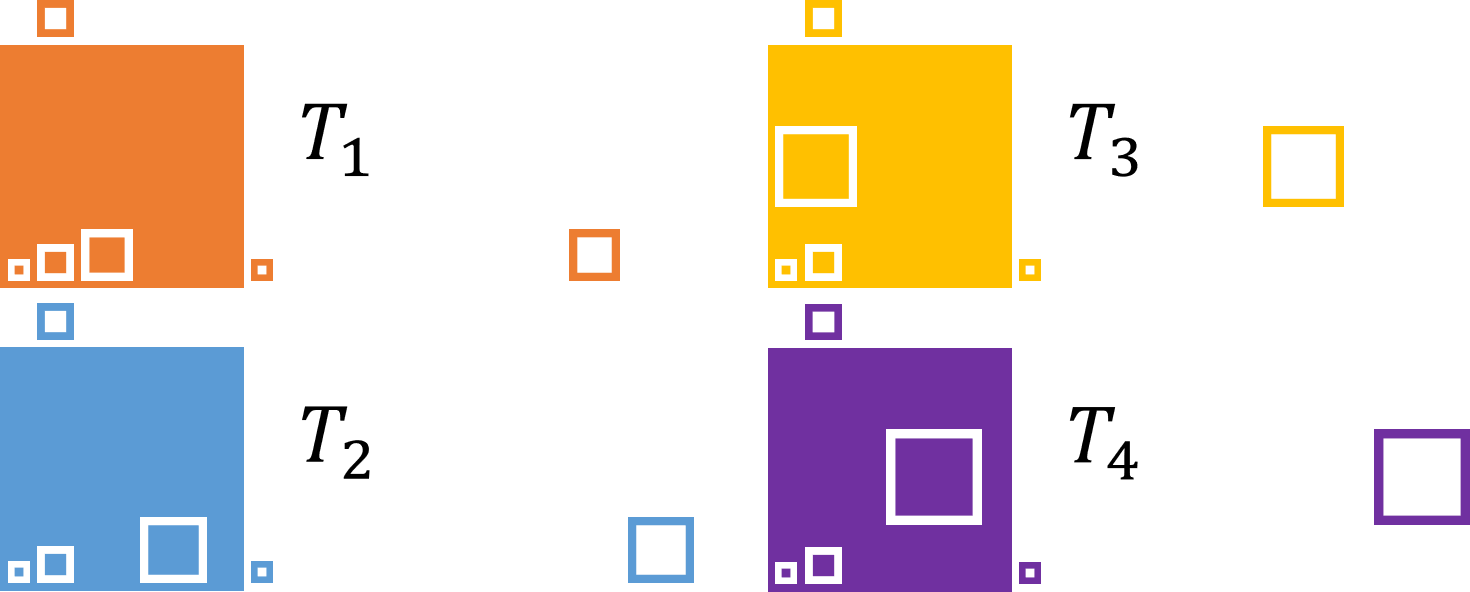}
		\caption{With the same parameters as in \Cref{Fig:T}, the tiles $T_j$ for $j\in\{1,2,3,4\}$, from \eqref{eq:Tj}, are illustrated above. Here $s=4$, $L$ is of rank $r=1$ and $w_1=(2,0)$.}\label{Fig:Tj}
	\end{figure}
	
Direct inspection of \eqref{eq:Tj} shows that for any $1\le j \le s$
we have
\begin{equation}\label{eq:T_properties}
T_j \oplus NL = T  \oplus NL
\end{equation}
From \eqref{eq:T_properties} and $T \oplus N\Z^d=\Z^d$ it follows that whenever $\{\tilde C_1,\ldots,\tilde C_s\}$ is a partition of $N\Z^d$ into $NL$-periodic sets, we have $(\tilde C_1,\ldots,\tilde C_s) \in \Tile_0(T_1,\ldots,T_s;\Z^d)$.

Now we show that for any $(\tilde C_1,\ldots,\tilde C_s) \in \Tile_0(T_1,\ldots,T_s;\Z^d)$ we have that  $\{\tilde C_1,\ldots,\tilde C_s\}$ is a partition of $N\Z^d$ into $NL$-periodic sets. 
As soon as we show that each $\tilde C_j$ is $NL$-periodic, it will follow from \eqref{eq:T_properties} that $\tilde C_j \oplus T_j = \tilde C_j \oplus T$ and so $\biguplus_{j=1}^s \tilde C_j \in \Tile_0(T;\Z^d)$, thus $\biguplus_{j=1}^s \tilde C_j = N\Z^d$. 
So it is left to explain why each $\tilde C_j$ is $NL$-periodic. The argument is essentially the same as the argument showing that each $C \in \Tile_0(T;\Z^d)$ is $N\Z^d$-periodic, and should be fairly clear from \Cref{Fig:Tj}.
In view of \eqref{eq:Tj}, the additional property of the collection $T_1,\ldots,T_s$ that is needed here is the following: For every $1 \le l \le r$ and every $1\le j,j' \le s$, if $v+T_{j'}$ covers $v_{d+r(j-1)+l}$ without intersecting the bump $Nw_l + v_{d+r(j-1)+l} + S_{d+r(j-1)+l}$ of $T_j$, then we must have $j=j'$ and $v=Nw_l$. See \Cref{Fig:Tj}.
\end{proof}

 \Cref{Fig:tildeF} illustrates the construction of the tiles $\tilde F_j\subset \Z^d$, as defined in \eqref{eq:tiling_reduction1}. 
\begin{figure}[ht!]
		\includegraphics[scale=0.4]{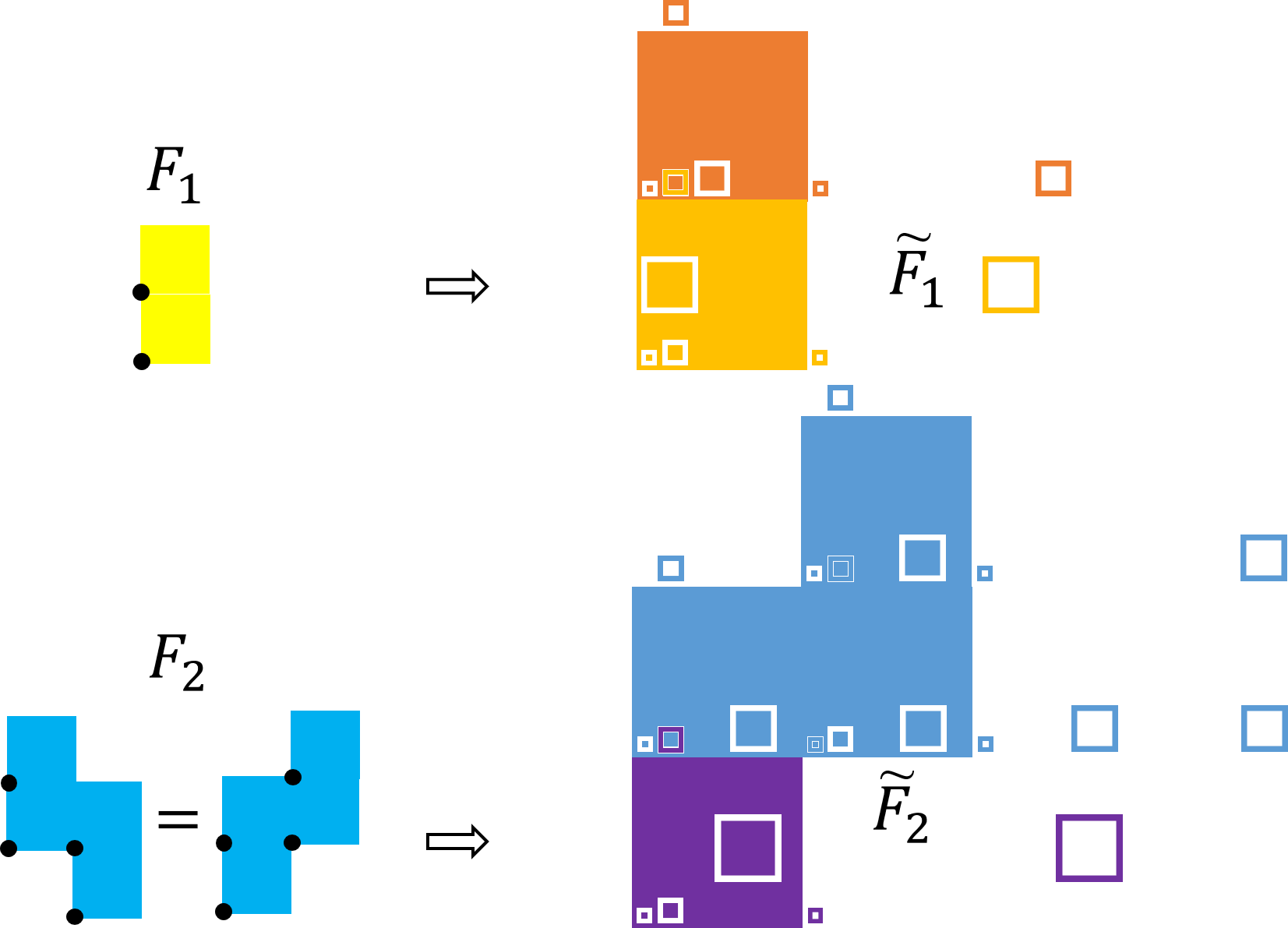}
		\caption{For $d=2$, $s=4$ and $\Gamma = (\Z/2\Z) \times \Z$, the figure illustrates a particular choice of tiles $F_1,F_2$ that can tile the ``vertical strip" $\Gamma$, and the corresponding construction of $\tilde F_1$ and $\tilde F_2$, using $T_1,T_2,T_3$ and $T_4$ from \Cref{Fig:Tj}. The $T_j$'s are constructed in this case using $L = \ker(\pi) =  \langle(2,0)\rangle$. Note the the two pictures of $F_2$ represent the same subset of $\Gamma$. }\label{Fig:tildeF}
	\end{figure}

For convenience we state and proof the following elementary lemma: 
\begin{lemma}\label{lem:direct_sum_pi}
Let $\pi:\Z^d \to \Gamma$ be a surjective group homomorphism, and let  $L = \ker(\pi) < \Z^d$.
\begin{enumerate}[label=(\alph*)]
    \item 
    If $A,B,C \subseteq \Z^d$ satisfy that $A \oplus B = C$ and $B$ is $L$-periodic then $\pi(A) \oplus \pi(B) = \pi(C)$.
    \item 
    If $C_1,\ldots,C_k \subseteq \Z^d$ are each $L$-periodic and $\biguplus_{j=1}^k C_j = C$ then $\biguplus_{j=1}^k \pi(C_j) = \pi(C)$.
\end{enumerate}
\end{lemma}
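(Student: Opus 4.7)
My plan is to handle the two parts separately, with part (a) as the main content and part (b) as an easy consequence of the same idea.

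For part (a), I would first verify the set-theoretic equality $\pi(A)+\pi(B)=\pi(C)$, which is immediate since $\pi$ is a homomorphism and $A+B=C$ as sets (indeed $A\oplus B = C$ in particular implies $A+B=C$). The content of the statement is therefore the uniqueness of representation. Suppose $\pi(a_1)+\pi(b_1)=\pi(a_2)+\pi(b_2)$ with $a_1,a_2\in A$ and $b_1,b_2\in B$. Then $(a_1+b_1)-(a_2+b_2)\in\ker(\pi)=L$. Writing $\ell:=(a_1+b_1)-(a_2+b_2)\in L$, I would set $b_2':=b_2+\ell$; the $L$-periodicity of $B$ gives $b_2'\in B$, and by construction $a_1+b_1=a_2+b_2'$. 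Since $A\oplus B=C$, the representation is unique, so $a_1=a_2$ and $b_1=b_2'=b_2+\ell$. Applying $\pi$ yields $\pi(a_1)=\pi(a_2)$ and $\pi(b_1)=\pi(b_2)$, as desired.

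For part (b), the same scheme works. Set-theoretically $\bigcup_j \pi(C_j)=\pi(\bigcup_j C_j)=\pi(C)$, so I only need the pairwise disjointness of the images. If $\pi(c_j)=\pi(c_{j'})$ for some $c_j\in C_j$, $c_{j'}\in C_{j'}$ with $j\neq j'$, then $c_j-c_{j'}\in L$, and $L$-periodicity of $C_{j'}$ gives $c_j=c_{j'}+(c_j-c_{j'})\in C_{j'}$, contradicting $C_j\cap C_{j'}=\emptyset$.

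There is no real obstacle here; the only point requiring care is to actually use the hypothesis that $B$ (respectively each $C_j$) is $L$-periodic, since without that hypothesis the conclusion fails in general. The $L$-periodicity is precisely what allows one to absorb any element of $L$ into the $B$-coordinate (or into $C_{j'}$), converting a potential collision modulo $L$ into a genuine collision in $\Z^d$ that the hypothesis $A\oplus B=C$ (respectively disjointness of the $C_j$'s) then rules out.
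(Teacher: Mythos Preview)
Your argument is correct and matches the paper's proof essentially line for line: for (a) you introduce $\ell=(a_1+b_1)-(a_2+b_2)\in L$, absorb it into $B$ via $L$-periodicity, and invoke uniqueness in $A\oplus B$; for (b) you reduce to pairwise disjointness and derive a contradiction from $L$-periodicity, just as the paper does (the paper phrases (b) as an induction reducing to $k=2$, but this is only cosmetic).
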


\begin{proof}
\begin{enumerate}[label=(\alph*)]
    \item 
    The equality  $\pi(A)+\pi(B) = \pi(C)$ follows directly because $\pi$ is a group homomorphism. We need to show that the sum is direct, namely that the representation $\pi(c)=\pi(a)+\pi(b)$ with $a \in A$, $b \in B$ is unique for every $c \in C$.
    Suppose $\pi(a_1)+\pi(b_1)=\pi(a_2)+\pi(b_2)$. Since every $c \in C$ has a unique representation $c=a+b$ with $a \in A$ and $b \in B$, we conclude that
    \[ 
    v:= (a_1+b_1) - (a_2 +b_2) \in L.
    \]
    Set $b_3 := b_2 +v$.  
    By the assumption that $B$ is $L$-periodic, it follows that $b_3 \in B$.
Then $a_2 +b_3 = a_1 + b_1$ are two representations of the same element as a sum of an element of $A$ and an element of $B$.
It follows that $a_1=a_2$ and $b_1=b_3$, so $b_1=b_2 +v$.
This means that $\pi(a_1)=\pi(a_2)$ and $\pi(b_1)=\pi(b_2)$.
We have thus proved that the representation $\pi(c)=\pi(a)+\pi(b)$ with $a \in A$, $b \in B$ is unique for every $c \in C$.
\item 
Using induction, it is enough to prove the case $k=2$. Clearly, $f(C_1\cup C_2) = f(C_1)\cup f(C_2)$ for every function $f$ and sets $C_1,C_2$ in its domain, so we only need to show that $\pi(C_1) \cap \pi(C_2) = \emptyset$ under the assumptions that $C_1 \cap C_2 = \emptyset$ and that each $C_i$ is $L$-periodic. Otherwise, there are $c_1\in C_1$ and $c_2\in C_2$ such that $\pi(c_1)=\pi(c_2)$. This means that $c_1 - c_2 \in L$. But $C_1$ is $L$-periodic and $c_1 \in C_1$ so $c_2 \in C_1$, contradicting $C_1 \cap C_2 =\emptyset$. 
\end{enumerate}
\end{proof}

We are now ready to prove our main result.
\begin{proof}[Proof of \Cref{thm:tiling_reduction}]
Given $d,k \in \N$ and a surjective homomorphism $\pi:\Z^d \to \Gamma$  let $L = \ker(\pi)$, let $D$ be a fundamental domain for $L$ that contains $0$, and let $T,T_1,\ldots,T_{2k} \subseteq \Z^d$ and $N \in \N$ satisfy the conclusion of \Cref{lem:rigid_tiles} with respect to $d,s=2k$ and $L$.

Given finite sets $F_1,\ldots,F_k \subset \Gamma$, let $\tilde F_1,\ldots, \tilde F_k \subset \Z^d$ be as in \eqref{eq:tiling_reduction1}. For the first inclusion, suppose $(A_1,\ldots,A_k) \in \Tile_0(F_1,\ldots,F_k;\Gamma)$, then by definition 
$\biguplus_{j=1}^k(F_j \oplus A_j) = \Gamma$. Applying $\pi^{-1}$, we obtain that
\begin{equation}\label{eq:F1,..,F_k_tile_Z^d}
\biguplus_{j=1}^k\pi^{-1}(F_j \oplus A_j) = \Z^d.
\end{equation}
For $1\le j \le k$ we define
\begin{equation}\label{eq:Cj}
\tilde C_j := \left( (N\pi^{-1}(F_j\setminus \{0\}) \cap D)\oplus N\pi^{-1}(A_j)\right) \mbox{ and }
\tilde C_{j+k} :=  N\pi^{-1}(A_j).
\end{equation}
By the identity 
%$\pi^{-1}(F_j \oplus A_j)= (\pi^{-1}(F_j) \cap D)\oplus \pi^{-1}(A_j)$, 
\[ 
\pi^{-1}(F_j \oplus A_j)= (\pi^{-1}(F_j) \cap D)\oplus \pi^{-1}(A_j)= (\pi^{-1}(F_j \setminus \{0\}) \cap D) \oplus \pi^{-1}(A_j)\uplus \pi^{-1}(A_j),
\]
and in view of \eqref{eq:Cj}, we see that for every $1\le j\le k$ we have 
$\pi^{-1}(F_j \oplus A_j) = \frac1N\left[\tilde C_j \uplus \tilde C_{j+k}\right]$. 
Thus multiplying the identity %$\biguplus_{j=1}^k\pi^{-1}(F_j \oplus A_j) = \Z^d$ 
in \eqref{eq:F1,..,F_k_tile_Z^d} by $N$ yields $\biguplus_{j=1}^{2k}\tilde C_j = N\Z^d$.

Note that $N\pi^{-1}(A_j)$ is $NL$-periodic, thus $\{\tilde C_1,\ldots,\tilde C_{2k}\}$ is a partition of $N\Z^d$ into $NL$-periodic sets. By  \Cref{lem:rigid_tiles} it follows that
\[
\left(\tilde C_1,\ldots, \tilde C_{2k} \right) \in \Tile_0(T_1,\ldots,T_{2k};\Z^d).
\]
Using \eqref{eq:Cj}, the expression 
$\biguplus_{j=1}^{2k} T_j \oplus \tilde C_j = \Z^d$ 
can be rearranged to 
\[
\biguplus_{j=1}^k \left[ \left((N \pi^{-1}(F_j\setminus \{0\}) \cap D) \oplus T_j \right) \uplus T_{j+k} \right]\oplus N\pi^{-1}(A_j) = \Z^d.
\]
But in view of the definition of the $\tilde F_j$ in \eqref{eq:tiling_reduction1}, this means that 
 $(N\pi^{-1}(A_1),\ldots,N\pi^{-1}(A_k)) \in \Tile_0(\tilde F_1,\ldots,\tilde F_k; \Z^d)$. We conclude that
\[
 N \Big( (\pi^{\otimes k})^{-1} \Tile_0\left(F_1,\ldots,F_k; \Gamma\right) \Big) \subseteq \Tile_0(\tilde F_1,\ldots,\tilde F_k; \Z^d).
\]
To see the other inclusion, let $(\tilde A_1,\ldots,\tilde A_k) \in \Tile_0(\tilde F_1,\ldots,\tilde F_k; \Z^d)$. Then the  following statements hold:
\begin{enumerate}
    \item 
    $0 \in \biguplus_{j=1}^k \tilde A_j$.
    \item 
    $\biguplus_{j=1}^k \tilde F_j \oplus \tilde A_j = \Z^d$.
\end{enumerate}
The second equation can be rewritten as follows:

\[
\biguplus_{j=1}^k \left[ \left((N \pi^{-1}(F_j\setminus \{0\}) \cap D) \oplus T_j \right) \uplus T_{j+k} \right]\oplus \tilde A_j = \Z^d.
\]
This can be rearranged as:
\begin{equation}\label{eq:A_tilde_tiling}
\biguplus_{j=1}^k \left(N(\pi^{-1}(F_j\setminus \{0\}) \cap D) \oplus T_j \oplus \tilde A_j\right) \uplus \biguplus_{j=1}^k  (T_{j+k} \oplus \tilde A_j) = \Z^d.
\end{equation}
Thus $(\tilde A_1,\ldots,\tilde A_k) \in \Tile_0(\tilde F_1,\ldots,\tilde F_k; \Z^d)$ if and only if 
\[\left( \tilde C_1, \ldots, \tilde C_{2k}\right) \in \Tile_0(T_1,\ldots,T_{2k}; \Z^d),\]
where
\[
\tilde C_j := N(\pi^{-1}(F_j\setminus\{0\}) \cap D)  \oplus \tilde A_j \mbox{ and  }\tilde C_{j+k} := \tilde A_j ~\text{ for } 1\le j \le k.
\]
By \Cref{lem:rigid_tiles}, this happens if and only if $\{\tilde C_1,\ldots,\tilde C_{2k}\}$ is a partition of $N\Z^d$ into $NL$-periodic sets.
Since $\tilde C_j \subseteq N\Z^d$ it follows that $\tilde A_j \subseteq N\Z^d$. Now because $\tilde C_{j+k}$ is $NL$-periodic, it follows that each $\tilde A_j$ is $NL$-periodic. This means that for every $1\le j \le k$ there exists $A_j \subseteq \Gamma$ such that
\begin{equation}\label{eq:tildeAj_formula}
\tilde A_j= N \pi^{-1}(A_j).
\end{equation}
It remains to show that $(A_1,\ldots,A_k) \in \Tile_0(F_1,\ldots,F_k;\Gamma)$. Because $0 \in \biguplus_{j=1}^k \tilde A_j$, it follows that $0 \in \biguplus_{j=1}^k A_j$. Using the property $T_l \oplus NL= T \oplus NL$ for every $1\le l\le 2k$, and the fact that each $\tilde A_j$ is $NL$-periodic, we may replace every $T_l$ in \eqref{eq:A_tilde_tiling} by $T$ to  obtain:
\[
\biguplus_{j=1}^k \left(N(\pi^{-1}(F_j\setminus \{0\}) \cap D) \oplus T \oplus \tilde A_j\right) \uplus \biguplus_{j=1}^k  (T \oplus \tilde A_j) = \Z^d. 
\]
Using the fact that $0\in D$, the left-hand side  can be rearranged to obtain the following:
\[
\left[\biguplus_{j=1}^k \left(N(\pi^{-1}(F_j) \cap D)  \oplus \tilde A_j \right) \right] \oplus T = \Z^d.
\]
Recall that by \Cref{lem:rigid_tiles} we have $\Tile_0(T;\Z^d) = \{N\Z^d\}$, thus 
\[
\biguplus_{j=1}^k \left(N(\pi^{-1}(F_j) \cap D)  \oplus \tilde A_j \right)  = N\Z^d.
\] 
Plugging in \eqref{eq:tildeAj_formula} we conclude that  $\biguplus_{j=1}^k \left(N(\pi^{-1}(F_j) \cap D)  \oplus N \pi^{-1}(A_j) \right)  = N\Z^d$. Since $v \mapsto N v$ induces a group isomorphism between $\Z^d$ and $N\Z^d$ we get $\biguplus_{j=1}^k \left((\pi^{-1}(F_j) \cap D)  \oplus  \pi^{-1}(A_j) \right)  = \Z^d$. By \Cref{lem:direct_sum_pi}, using the fact that $\pi^{-1}(A_j)$ is $L$-periodic, it follows that 
$\biguplus_{j=1}^k F_j \oplus A_j = \Gamma$.
We have thus shown that
\[
\Tile_0(\tilde F_1,\ldots,\tilde F_k; \Z^d) \subseteq 
N \left( (\pi^{\otimes k})^{-1} \Tile_0\left(F_1,\ldots,F_k;\Gamma\right) \right).
\] 
\end{proof} 

\begin{figure}[ht!]
		\includegraphics[scale=0.5]{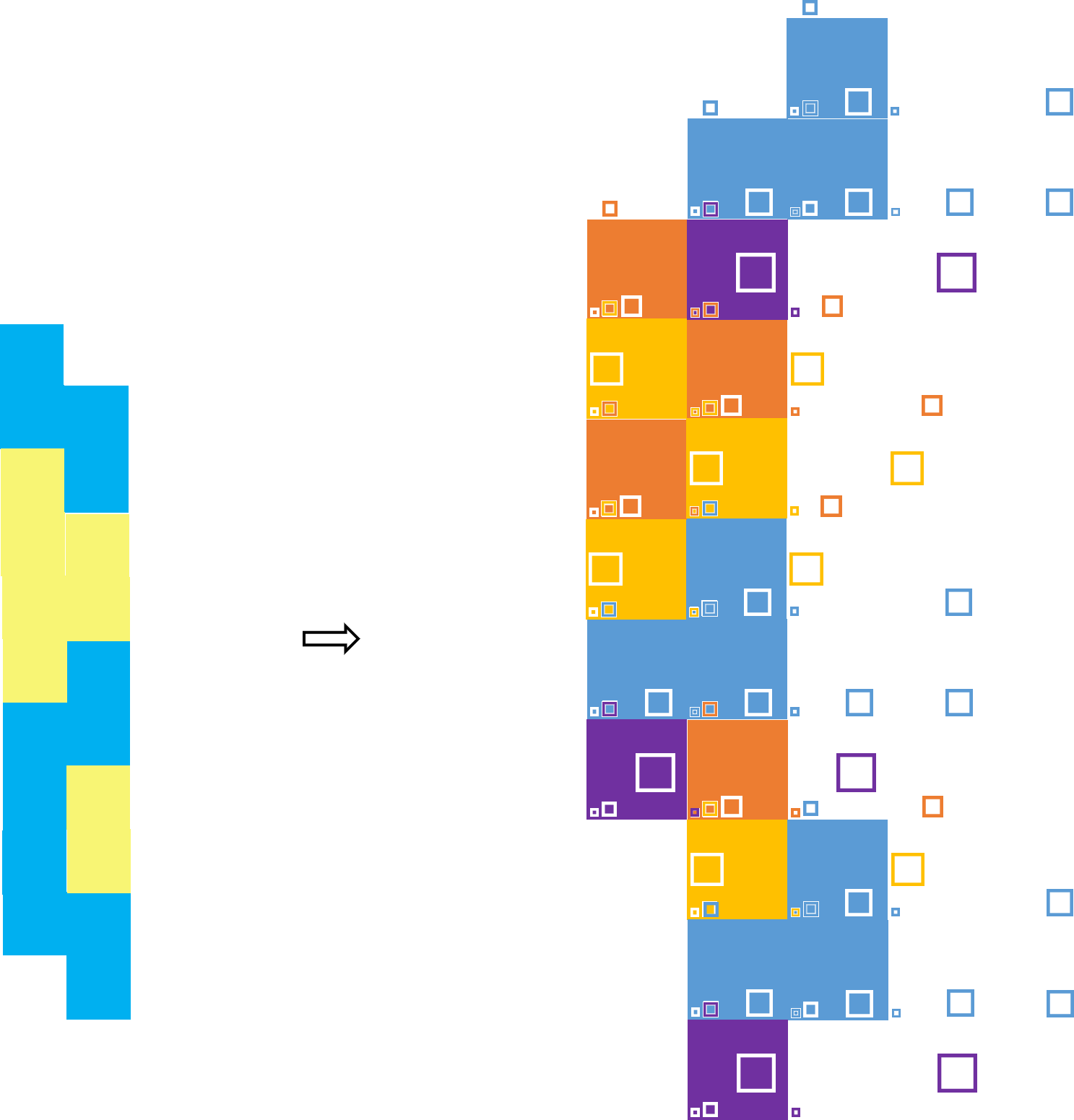}
		\caption{A patch of $\Gamma$-tiling using $F_1$ and $F_2$ and a patch of the corresponding $\Z^2$ -tiling using $\tilde F_1$ and $F_2$.
		%The additional role of the tiles $T_3$ and $T_4$ (the $T_{j+k}$ in the proof of \Cref{thm:tiling_reduction}) is to indicate a specific point - ``the center", in each $\tilde F_j$. Observe that if $\tilde F_1$ here was made of two copies of $T_1$, a tiling of $\Gamma$ by $F_1$ would not force an $L$-periodic tiling of $\Z^2$ by $\tilde F_1$.% thus the necessity of the tiles $T_{j+k}$ follow.
		}\label{Fig:Strip}
	\end{figure}

\section{Corollaries regarding periodicity, topological conjugacy, algorithmic decidability and more}

\begin{proof}[Proof of \Cref{cor:periodicity_reduction}] 
Suppose $(F_1,\ldots,F_k)$ is an aperiodic $k$-tuple of finite sets in $\Gamma$, then $\Tile(F_1,\ldots,F_k;\Gamma) \ne \emptyset$. By \eqref{eq:tiling_reduction1}, we have that $\Tile(\tilde F_1,\ldots,\tilde F_k;\Z^d) \ne \emptyset$. If we assume that $(\tilde F_1,\ldots,\tilde F_k)$ is not aperiodic, then by \eqref{eq:tiling_reduction1} there exists $(A_1,\ldots,A_k) \in \Tile(F_1,\ldots,F_k;\Gamma)$ such that $(N\pi^{-1}(A_1),\ldots,N\pi^{-1}(A_k))$ is a periodic $k$ tuple in $\Z^d$. But this implies that $(A_1,\ldots,A_k)$ is periodic. Thus, the existence of an aperiodic $k$-tuple in $\Z^d$ implies the existence of an aperiodic $k$-tuple in $\Gamma$.
\end{proof}

\begin{proof}[Proof of \Cref{cor:algorithmic_decidability_reduction}]
Assume that for a particular $k \in \mathbb{N}$ a tiling by $k$ tiles in $\Z^d$ is algorithmically decidable. By definition, this means that there exists a Turing machine $\tilde M$ that takes as input a $k$-tuple $\tilde F_1,\ldots,\tilde F_k$ of finite subsets in $\Z^d$ and accepts the input if and only if $\Tile(\tilde F_1,\ldots,\tilde F_k; \Z^d) \ne \emptyset$. The Turing machine $\tilde M$ always halts in finite time. Then we can construct a Turing machine $M$ that takes as input a $k$-tuple $F_1,\ldots,F_k$ of finite subsets in $\Gamma$, produces the finite subsets $\tilde F_1,\ldots,\tilde F_k$ of $\Z^d$ described in the statement of \Cref{thm:tiling_reduction}, then runs $\tilde M(\tilde F_1,\ldots,\tilde F_k)$. Clearly $M$ halts in finite time on input $F_1,\ldots,F_k$ if and only if $\tilde M$ halts with input $\tilde F_1,\ldots,\tilde F_k$. The Turing machine $M$ accepts $F_1,\ldots,F_k$ if and only if $\tilde M$ accepts $\tilde F_1,\ldots,\tilde F_k$, which by assumption happens if and only if $\Tile(\tilde F_1,\ldots,\tilde F_k; \Z^d) \ne \emptyset$. By \Cref{thm:tiling_reduction} this happens if and only if $\Tile(F_1,\ldots,F_k; \Z^d) \ne \emptyset$.
 \end{proof}
 
 \begin{proof}[Proof of \Cref{cor:logical_decidability}]
 The concatenation of the proof of $\Tile(\tilde F_1,\ldots,\tilde F_k; \Z^d) \ne \emptyset$ in $\mathcal{T}$ with the proof of \eqref{eq:tiling_reduction1} in $\mathcal{T}$ yields a proof of $\Tile(F_1,\ldots,F_k;\Gamma) \ne \emptyset$ in $\mathcal{T}$.
 \end{proof}
 
 To conclude this note, we explain why \Cref{cor:periodicity_reduction}  is a consequence of a stronger correspondence between translational tilings in $\Gamma$ and in $\Z^d$,  expressed in terms of topological dynamics.
 
 A \emph{$\Z^d$-topological dynamical system} is a pair $(X,T)$ where $X$ is a compact topological space and $T \in \mathit{Hom}(\Z^d,\mathit{Homeo}(X))$, the group of homomorphisms from $\Z^d$ to the group $\mathit{Homeo}(X)$ of self-homeomorphisms of $X$. We say that $\Z^d$-topological dynamical systems $(X,T)$ and $(Y,S)$ are \emph{topologically conjugate} or \emph{isomorphic} if there exists a homeomorphism $\Phi:X \to Y$ such that $S_v= \Phi \circ T_v \circ \Phi^{-1}$ for every $v \in \Z^d$.

 The space $(2^\Gamma)^k$ of $k$-tuples of finite subsets of $\Gamma$, equipped with the product topology, is a totally disconnected compact metrizable space. 
 Note that that wherever $\pi:\Z^d \to \Gamma$ is a surjective group homomorphism, the group $\Z^d$ acts on $(2^\Gamma)^k$ by homeomorphism via $\sigma^{(\pi)} \in \mathit{Hom}(\Z^d,\mathit{Homeo}(2^\Gamma)^k)$, where for every $v \in \Z^d$, $\sigma^{(\pi)}_v \in \mathit{Homeo}((2^\Gamma)^k)$ is given by 
 \[
\sigma^{(\pi)}_v(A_1,\ldots,A_k)= (A_1+\pi(v),\ldots,A_k+\pi(v)) \mbox{ for } (A_1,\ldots,A_k) \in (2^\Gamma)^k.
 \]

 For any $k$-tuple $(F_1,\ldots,F_k)$ of finite sets the space $\Tile(F_1,\ldots,F_k;\Gamma)$ is a closed, $\sigma^{(\pi)}$-invariant  subset of $(2^\Gamma)^k$. So $(\Tile(F_1,\ldots,F_k;\Gamma),\sigma^{(\pi)})$ is a $\Z^d$-topological dynamical system. 
 Let $N \in \N$ and $(\tilde F_1,\ldots,\tilde F_k)$ be a $k$-tuple of finite subsets of $\Z^d$ that satisfies \eqref{eq:tiling_reduction1}. Let 
 
 $\sigma, \sigma^{(N)} \in \mathit{Hom}(\Z^d,\mathit{Homeo}((2^{\Z^d})^k))$ be given by 
 \[
 \sigma_v(A_1,\ldots,A_k) = (A_1+v,\ldots,A_k +v) \mbox{ for } (A_1,\ldots,A_k) \in (2^{\Z^d})^k.
 \]
 and
 \[
\sigma^{(N)}_v(A_1,\ldots,A_k)= (A_1+Nv,\ldots,A_k+N v) \mbox{ for } (A_1,\ldots,A_k) \in (2^{\Z^d})^k.
 \]
 
 Let $D_N := \{0,\ldots,N-1\}^d \subset \Z^d$. 
 For each $t \in D_N$ let
 
 \[ X_t = \bigcup_{v \in N\Z^d +t}\sigma_v \left( \Tile_0(\tilde F_1,\ldots,\tilde F_k;\Z^d)\right).\]
Then each $X_t$ is a closed, $\sigma^{(N)}$-invariant subset of $\Tile(\tilde F_1,\ldots,\tilde F_k;\Z^d)$, and
$\Tile(\tilde F_1,\ldots,\tilde F_k; \Z^d)= \biguplus_{t \in D_N} X_t$. 
For each $t \in D_N$, the $\Z^d$-topological dynamical system  $(X_t,\sigma^{(N)})$ is topologically conjugate to $(\Tile(F_1,\ldots,F_k;\Gamma),\sigma^{(\pi)})$ via the map
\[ (A_1,\ldots, A_k) \mapsto \left( \pi\left(\frac{1}{N}(A_1 -t)\right),\ldots,\pi\left(\frac{1}{N}(A_k-t)\right)\right).\]

We conclude:

\begin{cor}\label{cor:dynamics_reduction}
For every $k$-tuple of finite subsets of $(F_1,\ldots,F_k)$ of $\Gamma$ there exists a $k$-tuple  $(\tilde F_1,\ldots,\tilde F_k)$ of finite subsets of $\Z^d$ and $N \in \N$  such that 
%$\Tile(\tilde F_1,\ldots,\tilde F_k;\Gamma)$ such that 
the $\Z^d$-topological dynamical system $(\Tile(\tilde F_1,\ldots,\tilde F_k; \Z^d),\sigma^{(N)})$ is topologically conjugate to the $\Z^d$-topological dynamical system $(\Tile(F_1,\ldots,F_k; \Gamma)\times \{1,\ldots,N^d\} ,\sigma^{(\pi)}\times \mathit{Id})$.
\end{cor}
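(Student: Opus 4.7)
The plan is to construct the conjugacy explicitly, following the setup already laid out immediately before the statement. The key ingredient is \Cref{thm:tiling_reduction}, which pins down $\Tile_0(\tilde F_1,\ldots,\tilde F_k;\Z^d)$ as $N(\pi^{\otimes k})^{-1}\Tile_0(F_1,\ldots,F_k;\Gamma)$; in particular, every element of $\Tile_0(\tilde F_1,\ldots,\tilde F_k;\Z^d)$ has its support $\bigcup_j \tilde A_j$ contained in $N\Z^d$.

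The first step is to show that $\Tile(\tilde F_1,\ldots,\tilde F_k;\Z^d) = \biguplus_{t \in D_N} X_t$ is a disjoint decomposition into clopen sets. For coverage, note that for any tiling $(\tilde A_1,\ldots,\tilde A_k)$ the union $\bigcup_j \tilde A_j$ is nonempty (otherwise $\biguplus_j \tilde F_j \oplus \tilde A_j \ne \Z^d$), and translating by $-w$ for any $w$ in this union yields an element of $\Tile_0$, placing the tuple in $X_t$ with $t \equiv w \pmod{N\Z^d}$. For disjointness, if a tuple lies in both $X_t$ and $X_{t'}$ then its support lies in $(N\Z^d + t) \cap (N\Z^d + t')$, forcing $t = t'$. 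Each $X_t$ is closed (membership is the closed condition $\bigcup_j \tilde A_j \subseteq N\Z^d + t$), and since the complement is the finite union $\bigcup_{t' \ne t} X_{t'}$, each $X_t$ is also open.

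Next I would analyze the stratum maps $\Phi_t : X_t \to \Tile(F_1,\ldots,F_k;\Gamma)$ given by
\[
\Phi_t(\tilde A_1,\ldots,\tilde A_k) = \left( \pi\bigl(\tfrac{1}{N}(\tilde A_1 - t)\bigr), \ldots, \pi\bigl(\tfrac{1}{N}(\tilde A_k - t)\bigr)\right).
\]
Combining \Cref{thm:tiling_reduction} with the definition of $X_t$, any element of $X_t$ has the form $(N\pi^{-1}(A^0_1) + Nw + t, \ldots, N\pi^{-1}(A^0_k) + Nw + t)$ with $(A^0_1,\ldots,A^0_k) \in \Tile_0(F_1,\ldots,F_k;\Gamma)$ and $w \in \Z^d$. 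A direct computation shows $\Phi_t$ of such a tuple equals $(A^0_1 + \pi(w), \ldots, A^0_k + \pi(w))$, which lies in $\Tile(F_1,\ldots,F_k;\Gamma)$ by $\sigma^{(\pi)}$-invariance of $\Tile$, and which ranges over all of $\Tile(F_1,\ldots,F_k;\Gamma)$ as $(A^0_j)$ and $w$ vary. Injectivity uses that $v \mapsto Nv$ is injective and $\pi$ is a bijection between $L$-periodic subsets of $\Z^d$ and subsets of $\Gamma$. Continuity is immediate from the product-topology definitions, and the equivariance $\Phi_t \circ \sigma^{(N)}_v = \sigma^{(\pi)}_v \circ \Phi_t$ is the direct observation that a shift by $Nv$ becomes a shift by $v$ after subtracting $t$ and dividing by $N$, which becomes a shift by $\pi(v)$ after applying $\pi$.

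Finally, I would glue the $\Phi_t$'s into a homeomorphism $\Phi(x) := (\Phi_t(x), t)$ for $x \in X_t$, identifying $D_N$ with $\{1,\ldots,N^d\}$ via any bijection. Since $\sigma^{(N)}_v$ preserves each stratum $X_t$ (because $Nv \in N\Z^d$), equivariance of $\Phi$ with respect to $\sigma^{(N)}$ and $\sigma^{(\pi)} \times \mathit{Id}$ reduces to the per-stratum equivariance. The only real content is the bijectivity of $\Phi_t$, which is essentially a repackaging of \Cref{thm:tiling_reduction}; the rest is bookkeeping about disjoint clopen decompositions and continuity in the product topology. I expect no serious obstacle, since the corollary is really a dynamical reformulation of the set-theoretic identity \eqref{eq:tiling_reduction1}.
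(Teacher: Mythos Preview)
Your proposal is correct and follows exactly the approach the paper sketches in the paragraph preceding the corollary: the same clopen decomposition $\Tile(\tilde F_1,\ldots,\tilde F_k;\Z^d)=\biguplus_{t\in D_N}X_t$ and the same per-stratum conjugacy $\Phi_t$, glued together. The paper asserts these facts without verification, whereas you supply the details (coverage, disjointness, bijectivity of $\Phi_t$ via the $L$-periodicity coming from \Cref{thm:tiling_reduction}, and equivariance); the only minor omission is the harmless normalization $0\in F_j$ needed to invoke \Cref{thm:tiling_reduction}.
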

\bibliographystyle{alpha}
\bibliography{lib}
\end{document}